\newtheorem{theorem}{Theorem}
\newtheorem{corollary}{Corollary}
\numberwithin{equation}{section}
\begin{document}
\title[{New mixed recurrence relations of two-variable orthogonal polynomials}]
{New mixed recurrence relations of two-variable orthogonal polynomials via differential operators}

\author[M.M. Makky and M. Shadab]{Mosaed M. Makky and Mohammad Shadab$^{*}$}

\address{Mosaed M. Makky: Departement of Mathematics, Faculty of Science, South Valley University, (Qena-Egypt).}
\email{mosaed\_makky11@yahoo.com, mosaed\_makky@sci.svu.edu.eg}

\address{Mohammad Shadab: Department of Natural and Applied Sciences,
 School of Science and Technology,
 Glocal University,
 Saharanpur 247121, India.}
\email{shadabmohd786@gmail.com}

\subjclass[2010]{Primary 33C45, 33C47 Secondary 11B37.}

\keywords{Jacobi polynomials, Legendre polynomials, Bateman's polynomials, Differential operators.}

\thanks{*Corresponding author}

\begin{abstract}
In this paper, we derive new recurrence relations for two-variable orthogonal polynomials for example Jacobi polynomial, Bateman's polynomial and Legendre polynomial via two different differential operators $\Xi =\left(\frac{\partial }{\partial z} +\sqrt{w} \frac{\partial
}{\partial w} \right)$ and $\Delta =\left(\frac{1}{w} \frac{\partial }{\partial z} +\frac{1}{z}
\frac{\partial }{\partial w} \right)$. We also derive some special cases of our main results.
\end{abstract}

\maketitle

\section{Introduction and preliminaries}
In recent decades, the study of the multi-variable orthogonal polynomials has been substantially developed by many authors \cite{2, Dunkl, 10}. The properties of the multi-variable orthogonal polynomials have analyzed by different approaches.The analytical properties of two-variable orthogonal polynomials like generating functions, recurrence relations, partial differential equations, and orthogonality have been remain the main attraction of the topic due to its wide range of applications in different research areas \cite{9, 11, 7, FSS, 13, 12}.\\

Some new classes of two-variables analogues of the Jacobi polynomials have been introduced from Jacobi weights by Koornwinder \cite{Koornwinder}. These all classes are introduced by means of two different partial differential operators $D_1$ and $D_2$, where $D_1$ has order two, and $D_2$ may have any arbitrary order. Koornwinder constructed bases of orthogonal polynomials in two-variables by using a tool given by Agahanov \cite{Agahanov}.\\

In 2017, M. Marriaga et al. \cite{Marriaga} derived some new recurrence relations involving two-variable orthogonal polynomials in a different way. In 2019, G.V. Milovanovic et at. \cite{Milovanovic} presented the study of various recurrence relations, generating functions and series expansion formulas for two families of orthogonal polynomials in two-variables. Motivated by these two studies, we present here some recurrence relations of two-variables orthogonal polynomials via differential operators.\\

The generalized hypergeometric function \cite[p.42-43]{Sri-Man} can be defined as
\begin{equation}\label{f.eq(g2)}
{_pF_q}\left[\begin{array}{c}\alpha_1,\ldots,\alpha_p;\\\beta_1,\ldots,\beta_q;\end{array}z\right]=\sum_{n=0}^{\infty}\frac{(\alpha_1)_n\ldots(\alpha_p)_n}{(\beta_1)_n\ldots(\beta_q)_n}\frac{z^n}{n!},
\end{equation}
with certain convergence conditions given in \cite[p.43]{Sri-Man}.\\

The Pochhammer symbol $(\lambda)_{\nu}$ ~$(\lambda, \nu \in\mathbb{C})$ \cite[p.22 eq(1)]{Rainville}, is defined by
\begin{equation}\label{f.eq(g1)}
\left(\lambda\right)_{\nu}:=\frac{\Gamma\left(\lambda+\nu\right)}{\Gamma\left(\lambda\right)}=\begin{cases}
\begin{array}{c}
1\\
~\\
\lambda\left(\lambda+1\right)\ldots\left(\lambda+n-1\right)
\end{array} & \begin{array}{c}
\left(\nu=0;\lambda\in\mathbb{C}\setminus\left\{ 0\right\} \right)\\
~\\
\left(\nu=n\in\mathbb{N};\lambda\in\mathbb{C}\right),
\end{array}\end{cases}
\end{equation}
it being understood $conventionally$ that $\left(0\right)_{0}=1$, and assumed $tacitly$ that the $\Gamma$ quotient exists.\\

The classical Jacobi polynomial $P_{n}^{(\alpha ,\beta )} (x)$ of degree $n$ $(n=0,1,2,\dots)$ \cite[p. 254(1)]{Rainville} defined as
\begin{eqnarray} \label{GrindEQ__1_3_}
&&P_{n}^{(\alpha ,\beta )} (x)=\frac{(1+\alpha )_{n} }{n!} {}_{2}
F_{1} \left(-n,1+\alpha +\beta +n;1+\alpha ;\frac{1-x}{2} \right),\\
&&\hskip10mm\Re(\alpha) >-1,\; \; \Re(\beta) >-1, x\in (-1,1).\nonumber
\end{eqnarray}

The generating function of the Jacobi polynomial $P_{n}^{(\alpha ,\beta )} (x)$ of degree $n$ \cite[p. 270(2)]{Rainville} is defined by
\begin{equation} \label{GrindEQ__1_4A_}
\sum _{n=0}^{\infty } P_{n}^{(\alpha ,\beta )} (x)\; t^{n} =F_{4} \left(1+\beta ,1+\alpha ;1+\alpha ,1+\beta ;\frac{1}{2} t(x-1),\frac{1}{2} t(x+1)\right)\end{equation}
where\\
$$F_{4} \left(1+\beta ,1+\alpha ;1+\alpha ,1+\beta ;\frac{1}{2} t(x-1),\frac{1}{2} t(x+1)\right),$$ is Appell polynomial \cite[p. 53 Eq. (7)]{Sri-Man}.\\

\noindent An elementary generating function of the Jacobi polynomial $P_{n}^{(\alpha ,\beta )} (x)$ \cite[p. 271 Eq. (6)]{Rainville} can presented in the form
\begin{equation} \label{GrindEQ__1_4_}
\sum _{n=0}^{\infty } P_{n}^{(\alpha ,\beta )} (x)t^{n}
= \rho^{-1}
\left(\frac{2}{1+t+\rho } \right)^{\beta }
\left(\frac{2}{1-t+\rho } \right)^{\alpha },
\end{equation}
or
\begin{equation}
\sum _{n=0}^{\infty } P_{n}^{(\alpha ,\beta )} (x)t^{n} =2^{\alpha +\beta } \rho^{-1} \left(1+t+\rho  \right)^{-\beta } \left(1-t+\rho  \right)^{-\alpha } ,
\end{equation}
where, $\rho=\left(1-2xt+t^{2} \right)^{\frac{1}{2} }$, and on setting $\alpha =\beta =0$, the Jacobi polynomial reduce to the Legendre Polynomial.\\

Recently, R. Khan et al. \cite{5} introduced generalization of two-variable Jacobi polynomial
\begin{eqnarray} \label{GrindEQ__1_8_}
&&P_{n}^{(\alpha ,\beta )} (x,y)=\sum _{k=0}^{n} \frac{\left(1+\alpha
\right)_{n} \left(1+\alpha +\beta \right)_{n+k}
}{k!(n-k)!\left(1+\alpha \right)_{k} \left(1+\alpha +\beta
\right)_{n} } \left(\frac{x-\sqrt{y} }{2} \right)^{k},\\
&&\hskip10mm \left(n=0,1,2,\dots; \Re(\alpha) >-1,\; \; \Re(\beta) >-1, x,y \in (-1,1)\right)\nonumber
\end{eqnarray}

which can be presented in the alternate form

\begin{equation} \label{GrindEQ__1_6_}
P_{n}^{(\alpha ,\beta )} (x,y)=\sum _{n,k=0}^{\infty }
\frac{\left(1+\alpha \right)_{n} \left(1+\beta \right)_{n}
}{k!(n-k)!\left(1+\alpha \right)_{k} \left(1+\beta \right)_{n-k} }
\left(\frac{x-\sqrt{y} }{2} \right)^{k} \left(\frac{x+\sqrt{y} }{2}
\right)^{n-k}
\end{equation}
and
\begin{equation} \label{GrindEQ__1_7_}
P_{n}^{(\alpha ,\beta )} (x,y)=\frac{\left(1+\alpha \right)_{n}
}{n!} \left(\frac{x+\sqrt{y} }{2} \right)^{n} {}_{2} F_{1}
\left(-n,-\beta -n;1+\alpha ;\frac{x-\sqrt{y} }{x+\sqrt{y} } \right)
\end{equation}
or
\begin{equation}P_{n}^{(\alpha ,\beta )} (x,y)=\frac{\left(1+\alpha \right)_{n} }{n!} {}_{2} F_{1} \left(-n,1+\alpha +\beta +n;1+\alpha ;\frac{\sqrt{y} -x}{2} \right).
\end{equation}

The generating functions of generalized Jacobi polynomial of two-variables $P_{n}^{(\alpha
,\beta )} (x,y)$ \cite{5} can be presented as follows
\begin{equation}\begin{array}{l} {\sum _{n=0}^{\infty } P_{n}^{(\alpha ,\beta )} (x,y)\; t^{n} } = \mu^{-1} \left(\frac{2}{1+\sqrt{y} t+\mu } \right)^{\beta } \left(\frac{2}{1-\sqrt{y} t+\mu } \right)^{\alpha } \end{array},\end{equation}
or
\begin{equation}\begin{array}{l} {\sum _{n=0}^{\infty } P_{n}^{(\alpha ,\beta )} (x,y)t^{n} } = 2^{\alpha +\beta } \mu^{-1} \left(1+\sqrt{y} t+\mu \right)^{-\beta } \left(1-\sqrt{y} t+\mu \right)^{-\alpha } \end{array},\end{equation}
\\

where, $\mu=\left(1-2xt+y\; t^{2} \right)^{\frac{1}{2} }.$\\

In another way, the generating function of generalized Jacobi polynomials of two variables $P_{n}^{(\alpha
,\beta )} (x,y)$ \cite{5} can be presented as follows
\begin{equation} \label{GrindEQ__1_5_}
\sum _{n=0}^{\infty } P_{n}^{(\alpha ,\beta )} (x,y)t^{n} =F_{4}
\left(1+\beta ,1+\alpha ;1+\alpha ,1+\beta ;\frac{1}{2} t(x-\sqrt{y}
),\frac{1}{2} t(x+\sqrt{y} )\right),
\end{equation}
which can be written in the form
\begin{equation}\sum _{n=0}^{\infty } P_{n}^{(\alpha ,\beta )} (x,y)\; t^{n} =\sum _{n,k=0}^{\infty } \frac{\left(1+\alpha \right)_{n+k} \left(1+\beta \right)_{n+k} \frac{1}{2} \left(x-\sqrt{y} \right)^{k} \frac{1}{2} \left(x+\sqrt{y} \right)^{n} t^{n} }{k!n!\left(1+\alpha \right)_{k} \left(1+\beta \right)_{n} }.
\end{equation}

Bateman's polynomial, and its generating function \cite{5} can be deduce from equation \eqref{GrindEQ__1_8_} as follows

\begin{eqnarray} \label{GrindEQ__3_1_}
&&{\rm B} _{n}^{(\alpha ,\beta )} (x,y)=\left[\sum _{n=0}^{\infty }
\frac{\frac{1}{2} \left(x-\sqrt{y} \right)^{n} t^{n}
}{n!\left(1+\alpha \right)_{n} } \right]\left[\sum _{n=0}^{\infty }
\frac{\frac{1}{2} \left(x+\sqrt{y} \right)^{n} t^{n}
}{n!\left(1+\beta \right)_{n} } \right]\\
&&\hskip20mm \left(\Re(\alpha) >-1, \; \Re(\beta) >-1, |x|<1, |y|<1 \right). \nonumber
\end{eqnarray}
and
\begin{equation}
{\rm B} _{n}^{(\alpha ,\beta )} (x,y)=\sum _{n=0}^{\infty } \frac{P_{n}^{(\alpha ,\beta )} (x,y)t^{n} }{\left(1+\alpha \right)_{n} \left(1+\beta \right)_{n} } .
\end{equation}

The generalized Jacobi polynomial of two-variables $P_{n}^{(\alpha ,\beta )} (x,y)$ reduces to the Legendre polynomial of two variables $P_{n} (x,y)$ for $\alpha =\beta =0$ in \eqref{GrindEQ__1_8_}

\begin{equation} \label{GrindEQ__1_9_}
P_{n} (x,y)=\sum _{k=0}^{n} \frac{(n+k)!}{\left(k!\right)^{2} (n-k)!} \left(\frac{x-\sqrt{y} }{2} \right)^{k},
\end{equation}

and its generating function can be given by

\begin{equation}
\sum _{n=0}^{\infty } P_{n} (x,y)\; t^{n} =\left(1-2xt+yt^{2} \right)^{-\frac{1}{2} } .
\end{equation}

Also, Khan and Abukhammash \cite{Kha-Abu} defined the Legendre Polynomials of
two-variables $P_{n} (x,y)$ as
\begin{equation}
P_{n} (x,y)=\sum _{k=0}^{[{n\mathord{\left/ {\vphantom {n 2}} \right. \kern-\nulldelimiterspace} 2} ]} \frac{\left(-y\right)^{k} \left(\frac{1}{2} \right)_{n-k} \left(2x\right)^{n-2k} }{k!(n-k)!}
\end{equation}

and the generating function for $P_{n} (x,y)$ is given by

\begin{equation}
\sum _{k=0}^{n} P_{n} (x,y)\; t^{n} =\left(1-2xt+y\; t^{2} \right)^{\frac{1}{2} } \end{equation}.

\section {\bf  Recurrence relations for Jacobi polynomials}

In this section, we will study the action of the following  differential operator
\begin{equation} \label{GrindEQ__2_1_}
\Xi =\left(\frac{\partial }{\partial z} +\sqrt{w} \frac{\partial
}{\partial w} \right)
\end{equation}
on complex bivariate Jacobi polynomial $P_{n}^{(\alpha ,\beta )}(z,w)$ \eqref{GrindEQ__2_2_} to obtain the desired results.\\

Now, we present complex bivariate Jacobi polynomial by replacing $x,y \in \mathbb{R}$ by $z,w \in \mathbb{C}$ such that
\begin{eqnarray} \label{GrindEQ__2_2_}
&&P_{n}^{(\alpha ,\beta )} (z,w)=\sum _{k=0}^{n} \frac{\left(1+\alpha
\right)_{n} \left(1+\alpha +\beta \right)_{n+k}
}{k!(n-k)!\left(1+\alpha \right)_{k} \left(1+\alpha +\beta
\right)_{n} } \left(\frac{z-\sqrt{w} }{2} \right)^{k}\\
&&\hskip20mm \left(\Re(\alpha) >-1, \; \Re(\beta) >-1, |z|<1, |w|<1 \right)\nonumber.
\end{eqnarray}

Following conjugate relations will be use frequently in the paper.
\begin{eqnarray}
&&\left(1+\alpha +\beta \right)_{n+k+1} =\left(1+\alpha +\beta \right)\left(2+\alpha +\beta \right)\left[1+(1+\alpha )+(1+\beta )\right]_{(n-1)+k};\\
&&\left(1+\alpha \right)_{k+1} =\left(1+\alpha \right)\left(1+(1+\alpha )\right)_{k}; \\
&&\left(1+\alpha \right)_{n} =\left(1+\alpha \right)\left(1+(1+\alpha )\right)_{n-1}; \\
&&\left(1+\alpha +\beta \right)_{n+1} =\left(1+\alpha +\beta \right)_{n} \left(1+\alpha +\beta +n\right).
\end{eqnarray}

\begin{theorem}
 Following recurrence relation for the Jacobi Polynomial $P_{n}^{(\alpha ,\beta )} (z,w)$ holds true
\begin{eqnarray} \label{GrindEQ__2_3_}
&&\frac{\partial }{\partial z} \; P_{n}^{(\alpha ,\beta )} (z,w)+ \sqrt{w} \frac{\partial }{\partial w}\; P_{n}^{(\alpha ,\beta )} (z,w)-\frac{\left(1+\alpha +\beta
+n\right)}{4} \; P_{n-1}^{(1+\alpha ),(1+\beta )} (z,w)=0\nonumber\\
\\
&&\hskip20mm \left(\Re(\alpha) >-1, \; \Re(\beta) >-1, |z|<1, |w|<1 \right)\nonumber.
\end{eqnarray}
\end{theorem}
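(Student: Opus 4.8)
The plan is to exploit the fact that $P_{n}^{(\alpha,\beta)}(z,w)$ depends on $z$ and $w$ only through the single combination $u:=\frac{z-\sqrt{w}}{2}$, so that the action of $\Xi$ on it reduces to a one-variable computation on powers of $u$. The conceptual crux is to compute $\Xi(u^{k})$ directly by the chain rule. Since $\frac{\partial u}{\partial z}=\frac{1}{2}$ and $\frac{\partial u}{\partial w}=-\frac{1}{4\sqrt{w}}$, we obtain
$$\Xi\!\left(u^{k}\right)=k\,u^{k-1}\left(\frac{1}{2}\right)+\sqrt{w}\,k\,u^{k-1}\left(-\frac{1}{4\sqrt{w}}\right)=\frac{k}{4}\,u^{k-1}.$$
The essential point is that the factor $\sqrt{w}$ built into the operator exactly balances the $1/\sqrt{w}$ produced when differentiating $\sqrt{w}$, so that $\Xi$ behaves as a clean lowering operator on the $u$-grading.

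Next I would apply $\Xi$ termwise to the defining series \eqref{GrindEQ__2_2_}. The $k=0$ term is annihilated, and using $\frac{k}{k!}=\frac{1}{(k-1)!}$ together with the reindexing $j=k-1$, the sum becomes
$$\Xi\,P_{n}^{(\alpha,\beta)}(z,w)=\frac{1}{4}\sum_{j=0}^{n-1}\frac{(1+\alpha)_{n}\,(1+\alpha+\beta)_{n+j+1}}{j!\,(n-1-j)!\,(1+\alpha)_{j+1}\,(1+\alpha+\beta)_{n}}\,u^{j}.$$
I would then rewrite the Pochhammer factors using the four conjugate relations stated just before the theorem, namely $(1+\alpha)_{n}=(1+\alpha)(2+\alpha)_{n-1}$, $(1+\alpha)_{j+1}=(1+\alpha)(2+\alpha)_{j}$, and $(1+\alpha+\beta)_{n+j+1}=(1+\alpha+\beta)(2+\alpha+\beta)(3+\alpha+\beta)_{(n-1)+j}$. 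After cancelling the common factor $(1+\alpha)$, each coefficient is to be matched against the corresponding coefficient of $P_{n-1}^{(1+\alpha,1+\beta)}(z,w)$, whose series (read off from \eqref{GrindEQ__2_2_} with parameters $1+\alpha$, $1+\beta$ and degree $n-1$) carries precisely the factors $(2+\alpha)_{n-1}$, $(2+\alpha)_{j}$, $(3+\alpha+\beta)_{(n-1)+j}$ and $(3+\alpha+\beta)_{n-1}$.

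The main—and essentially the only—obstacle is the Pochhammer bookkeeping: once all common factors are cancelled, the asserted equality collapses to the single scalar identity
$$\frac{(1+\alpha+\beta)(2+\alpha+\beta)}{(1+\alpha+\beta)_{n}}=\frac{(1+\alpha+\beta+n)}{(3+\alpha+\beta)_{n-1}}.$$
I would verify this by writing out both products explicitly: the left-hand side telescopes to $\frac{1}{(3+\alpha+\beta)_{n-2}}$, while on the right one has $(3+\alpha+\beta)_{n-1}=(3+\alpha+\beta)_{n-2}\,(n+1+\alpha+\beta)$, so the right-hand side also equals $\frac{1}{(3+\alpha+\beta)_{n-2}}$. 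This confirms the coefficientwise identity $\Xi\,P_{n}^{(\alpha,\beta)}(z,w)=\frac{1+\alpha+\beta+n}{4}\,P_{n-1}^{(1+\alpha,1+\beta)}(z,w)$, which upon rearrangement is exactly the claimed relation \eqref{GrindEQ__2_3_}.
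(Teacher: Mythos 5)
Your proposal is correct and follows essentially the same route as the paper's own proof: termwise application of $\Xi$ using the chain rule on $u=\tfrac{z-\sqrt{w}}{2}$ (yielding the factor $\tfrac{1}{2}-\tfrac{1}{4}=\tfrac{1}{4}$), the index shift $k\to k+1$, and the same four conjugate Pochhammer relations to recognize the coefficients of $P_{n-1}^{(1+\alpha,1+\beta)}(z,w)$. Your explicit verification of the residual scalar identity $\frac{(1+\alpha+\beta)(2+\alpha+\beta)}{(1+\alpha+\beta)_{n}}=\frac{(1+\alpha+\beta+n)}{(3+\alpha+\beta)_{n-1}}$ is exactly the step the paper performs implicitly via $(1+\alpha+\beta)_{n+1}=(1+\alpha+\beta)_{n}(1+\alpha+\beta+n)$, so the two arguments coincide.
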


\begin{proof}
On applying the operator \eqref{GrindEQ__2_1_} in \eqref{GrindEQ__2_2_}, we get

\begin{eqnarray}
&& \left(\frac{\partial }{\partial z} +\sqrt{w} \frac{\partial }{\partial w} \right)P_{n}^{(\alpha ,\beta )} (z,w)\nonumber\\
&&=\left(\frac{\partial }{\partial z} +\sqrt{w} \frac{\partial }{\partial w} \right)\sum _{k=0}^{n} \frac{\left(1+\alpha \right)_{n} \left(1+\alpha +\beta \right)_{n+k} }{k!(n-k)!\left(1+\alpha \right)_{k} \left(1+\alpha +\beta \right)_{n} } \left(\frac{z-\sqrt{w} }{2} \right)^{k}\nonumber\\
&&=\sum _{k=0}^{n} \frac{k\left(1+\alpha \right)_{n} \left(1+\alpha +\beta \right)_{n+k} }{k!(n-k)!\left(1+\alpha \right)_{k} \left(1+\alpha +\beta \right)_{n} } \left(\frac{z-\sqrt{w} }{2} \right)^{k-1} \left(\frac{1}{2} -\frac{1}{4} \right)\nonumber\\
&&= \frac{1}{4} \sum_{k=0}^{n} \frac{k\left(1+\alpha \right)_{n} \left(1+\alpha +\beta\right)_{n+k}}{k!(n-k)!\left(1+\alpha \right)_{k} \left(1+\alpha +\beta \right)_{n} } \left(\frac{z-\sqrt{w} }{2} \right)^{k-1}.\nonumber
\end{eqnarray}
Now, on replacing $k\rightarrow k+1$ and simplifications, we get
\begin{eqnarray}
&&=\frac{1}{4} \sum _{k=0}^{n} \frac{\left(1+\alpha \right)_{n} \left(1+\alpha +\beta \right)_{n+k+1} }{k!\left[n-(k+1)\right]!\left(1+\alpha \right)_{k+1} \left(1+\alpha +\beta \right)_{n} } \left(\frac{z-\sqrt{w} }{2} \right)^{k}\nonumber\\
&&=\frac{1}{4} \sum _{k=0}^{n} \frac{\left(1+\alpha \right)\left(1+(1+\alpha )\right)_{n-1} \left(1+\alpha +\beta \right)\left(2+\alpha +\beta \right) }{k!\left((n-1)-k\right)!\left(1+\alpha \right)\left(1+(1+\alpha )\right)_{k} }\nonumber\\
&&\times\frac{\left(1+(1+\alpha )+(1+\beta )\right)_{(n-1)+k}\left(1+\alpha +\beta +n\right)}{\left(1+(1+\alpha )+(1+\beta )\right)_{(n-1)} \left(1+\alpha +\beta \right)\left(2+\alpha +\beta \right)} \left(\frac{z-\sqrt{w} }{2} \right)^{k}\nonumber\\
&&=\frac{1+\left(\alpha +\beta +n\right)}{4} \sum _{k=0}^{n} \frac{\left(1+(1+\alpha )\right)_{n-1} \left[1+(1+\alpha )+(1+\beta )\right]_{(n-1)+k} }{k!\left((n-1)-k\right)!\left(1+(1+\alpha )\right)_{k} \left[1+(1+\alpha )+(1+\beta )\right]_{n-1} } \; \left(\frac{z-\sqrt{w} }{2} \right)^{k}\nonumber\\
&&=\frac{(1+\left(\alpha +\beta +n\right)}{4} \; P_{n-1}^{(1+\alpha ),(1+\beta )} (z,w).
\end{eqnarray}

Therefore, we get the desired result.\\
\end{proof}

\begin{corollary}
 Following recurrence relation  for the Jacobi Polynomial $P_{n}^{(\alpha ,\beta )} (z,1)$ holds true
\begin{eqnarray} \label{GrindEQ__2_3_}
&&\frac{\partial }{\partial z} \; P_{n}^{(\alpha ,\beta )} (z,1)-\frac{(1+\left(\alpha +\beta
+n\right)}{2} \; P_{n-1}^{(1+\alpha ),(1+\beta )} (z,1)=0\\
&&\hskip20mm \left(\Re(\alpha) >-1, \; \Re(\beta) >-1, |z|<1 \right)\nonumber.
\end{eqnarray}
\end{corollary}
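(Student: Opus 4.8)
The plan is to obtain the corollary as the specialization $w=1$ of the theorem, but with the crucial caveat that one cannot simply discard the $\sqrt{w}\,\partial_w$ term, since it does not vanish at $w=1$; indeed the passage from the prefactor $\tfrac14$ in the theorem to the prefactor $\tfrac12$ here is exactly what that term accounts for. First I would record the structural fact that $P_n^{(\alpha,\beta)}(z,w)$ depends on $z$ and $w$ only through the single argument $\xi=\tfrac{z-\sqrt{w}}{2}$, so that $P_n^{(\alpha,\beta)}(z,w)=g(\xi)$ for the polynomial $g$ determined by \eqref{GrindEQ__2_2_}.

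Next I would apply the chain rule to this one-variable dependence. Since $\partial\xi/\partial z=\tfrac12$ and $\partial\xi/\partial w=-\tfrac{1}{4\sqrt{w}}$, we obtain $\tfrac{\partial}{\partial z}P_n^{(\alpha,\beta)}=\tfrac12\,g'(\xi)$ and $\sqrt{w}\,\tfrac{\partial}{\partial w}P_n^{(\alpha,\beta)}=-\tfrac14\,g'(\xi)$, whence the clean relation $\sqrt{w}\,\tfrac{\partial}{\partial w}P_n^{(\alpha,\beta)}=-\tfrac12\,\tfrac{\partial}{\partial z}P_n^{(\alpha,\beta)}$. This is essentially the only computation that needs to be checked, and it is immediate; it is also the step that secretly carries the doubling of the coefficient.

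I would then feed this relation into the theorem. Its left-hand combination $\tfrac{\partial}{\partial z}P+\sqrt{w}\,\tfrac{\partial}{\partial w}P$ collapses to $\tfrac12\,\tfrac{\partial}{\partial z}P$, so the theorem's identity becomes $\tfrac12\,\tfrac{\partial}{\partial z}P_n^{(\alpha,\beta)}(z,w)=\tfrac{1+\alpha+\beta+n}{4}\,P_{n-1}^{(1+\alpha),(1+\beta)}(z,w)$. Multiplying through by $2$ yields $\tfrac{\partial}{\partial z}P_n^{(\alpha,\beta)}(z,w)=\tfrac{1+\alpha+\beta+n}{2}\,P_{n-1}^{(1+\alpha),(1+\beta)}(z,w)$ for every admissible $w$. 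Finally I would set $w=1$, where $\sqrt{w}=1$ and the argument reduces to $\tfrac{z-1}{2}$, recovering the stated relation for $P_n^{(\alpha,\beta)}(z,1)$.

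An equivalent, more self-contained route is to bypass the theorem and differentiate the series \eqref{GrindEQ__2_2_} at $w=1$ directly: the term-by-term $z$-derivative now produces the prefactor $\tfrac12$ rather than $\tfrac14$ (since only $\partial_z$ acts), and the same index shift $k\mapsto k+1$ together with the four conjugate Pochhammer relations recorded just before the theorem reassembles the sum as $\tfrac{1+\alpha+\beta+n}{2}\,P_{n-1}^{(1+\alpha),(1+\beta)}(z,1)$. I do not expect a genuine obstacle in either route; the one point that genuinely requires care is precisely the remark above, namely that the corollary is \emph{not} obtained by naively setting the $w$-derivative to zero, and the factor $\tfrac12$ must be traced either to the chain-rule identity or, in the direct computation, to the single surviving derivative.
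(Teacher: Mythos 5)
Your proposal is correct, and your primary route is genuinely different from the paper's. The paper proves the corollary from scratch: it substitutes $w=1$ into \eqref{GrindEQ__2_2_}, applies $\Xi_z=\partial/\partial z$ alone, and repeats the theorem's computation, where the term-by-term derivative now carries the prefactor $\tfrac12$ in place of $\tfrac12-\tfrac14=\tfrac14$, and the same shift $k\mapsto k+1$ with the conjugate Pochhammer relations produces the coefficient $\tfrac{1+\alpha+\beta+n}{2}$ --- this is exactly the ``self-contained'' alternative you sketch in your final paragraph. Your main argument instead deduces the corollary formally from Theorem 1: since $P_n^{(\alpha,\beta)}(z,w)$ depends on $(z,w)$ only through $\xi=\tfrac{z-\sqrt{w}}{2}$, the chain rule gives $\sqrt{w}\,\partial_w=-\tfrac12\,\partial_z$ on such functions, the theorem's left-hand side collapses to $\tfrac12\,\partial_z P$, and you obtain the stronger identity $\partial_z P_n^{(\alpha,\beta)}(z,w)=\tfrac{1+\alpha+\beta+n}{2}\,P_{n-1}^{(1+\alpha),(1+\beta)}(z,w)$ valid for \emph{all} admissible $w$, of which the corollary is the $w=1$ slice. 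I checked your chain-rule identity and the resulting coefficient directly against the series expansion, and both are right. Your route buys two things the paper's proof leaves implicit: the identity at general $w$, and a transparent explanation of why the constant doubles from $\tfrac14$ to $\tfrac12$ --- your caveat is well taken, since the $\sqrt{w}\,\partial_w$ term does \emph{not} vanish at $w=1$, and naively discarding it from the theorem would yield the wrong coefficient $\tfrac{1+\alpha+\beta+n}{4}$. What the paper's recomputation buys in exchange is independence from the theorem's statement, at the cost of redoing the Pochhammer bookkeeping.
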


\begin{proof}
First, put w=1 in \eqref{GrindEQ__2_2_} we consider the
Jacobi polynomials
\begin{equation} \label{GrindEQ__2_4_}
P_{n}^{(\alpha ,\beta )} (z,1)=\sum _{k=0}^{n} \frac{\left(1+\alpha
\right)_{n} \left(1+\alpha +\beta \right)_{n+k}
}{k!(n-k)!\left(1+\alpha \right)_{k} \left(1+\alpha +\beta
\right)_{n} } \left(\frac{z-1}{2} \right)^{k}.
\end{equation}

Taking differential operator $\Xi _{z} =\left(\frac{\partial}{\partial z} \right)$ then following the same process used in the above theorem leads to the desired result.
\end{proof}

\begin{corollary}
Following recurrence relation for the Jacobi Polynomial $P_{n}^{(\alpha ,\beta )} (1,w)$ holds true
\begin{eqnarray}
&&\left(\sqrt{w} \frac{\partial }{\partial w} \right)\,P_{n}^{(\alpha ,\beta )} (1,w)+\frac{(1+\left(\alpha +\beta +n\right)}{4} \; P_{n-1}^{(1+\alpha ),(1+\beta )} (1,w)=0\nonumber\\
\\
&&\hskip20mm \left(\Re(\alpha) >-1, \; \Re(\beta) >-1, |w|<1 \right)\nonumber.
\end{eqnarray}
\end{corollary}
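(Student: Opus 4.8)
The plan is to imitate the single-operator version of the main theorem, specializing at $z=1$ from the very start rather than substituting into the theorem's conclusion. First I would set $z=1$ in the series definition \eqref{GrindEQ__2_2_}, writing $P_{n}^{(\alpha,\beta)}(1,w)$ as the sum over $k$ of the same Pochhammer coefficients multiplied by $\left(\tfrac{1-\sqrt{w}}{2}\right)^{k}$. Since now only the operator $\sqrt{w}\,\partial/\partial w$ acts, the whole computation reduces to understanding its effect on the powers $\left(\tfrac{1-\sqrt{w}}{2}\right)^{k}$.

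The key elementary step is the observation that $\frac{\partial}{\partial w}\left(\frac{1-\sqrt{w}}{2}\right)=-\frac{1}{4\sqrt{w}}$, whence
\begin{equation*}
\sqrt{w}\,\frac{\partial}{\partial w}\left(\frac{1-\sqrt{w}}{2}\right)^{k}=-\frac{k}{4}\left(\frac{1-\sqrt{w}}{2}\right)^{k-1}.
\end{equation*}
This is exactly the $\sqrt{w}\,\partial/\partial w$ contribution that sat inside the factor $\left(\tfrac12-\tfrac14\right)$ in the proof of the theorem; there the $\partial/\partial z$ part supplied the $+\tfrac12$ and the net coefficient was $+\tfrac14$, whereas here the lone surviving contribution is $-\tfrac14$. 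This sign reversal is precisely what turns the minus sign of the theorem into the plus sign of the present statement.

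With this in hand I would run the same manipulation as in the theorem: shift the summation index $k\to k+1$ and apply the four conjugate Pochhammer relations displayed just before the theorem to recast the coefficients in terms of $\bigl(1+(1+\alpha)\bigr)$ and $\bigl[1+(1+\alpha)+(1+\beta)\bigr]$. The factor $(1+\alpha+\beta+n)$ is extracted exactly as before, and the residual sum is recognized as $P_{n-1}^{(1+\alpha),(1+\beta)}(1,w)$. The outcome is
\begin{equation*}
\sqrt{w}\,\frac{\partial}{\partial w}\,P_{n}^{(\alpha,\beta)}(1,w)=-\frac{1+\alpha+\beta+n}{4}\,P_{n-1}^{(1+\alpha),(1+\beta)}(1,w),
\end{equation*}
which is the assertion after transposing the right-hand term.

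I expect no serious obstacle, since every step is a verbatim specialization of the theorem's proof. The one point demanding care is the bookkeeping of the sign and the factor $\tfrac14$: because the $\partial/\partial z$ term is absent when we restrict to the operator $\sqrt{w}\,\partial/\partial w$, one must not merely substitute $z=1$ into the theorem's identity (which would spuriously leave behind a $\partial/\partial z$ evaluation) but instead recompute the single-operator action directly, as above.
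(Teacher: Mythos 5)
Your proposal is correct and follows essentially the same route as the paper: the paper likewise substitutes $z=1$ into the series \eqref{GrindEQ__2_2_} first and then applies the single operator $\Xi_w=\sqrt{w}\,\frac{\partial}{\partial w}$, "following the same process used in the above theorem." Your write-up merely makes explicit what the paper leaves implicit --- in particular the key sign computation $\sqrt{w}\,\frac{\partial}{\partial w}\left(\frac{1-\sqrt{w}}{2}\right)^{k}=-\frac{k}{4}\left(\frac{1-\sqrt{w}}{2}\right)^{k-1}$, which correctly explains why the theorem's $-\frac{1}{4}$ coefficient becomes $+\frac{1}{4}$ here.
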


\begin{proof}
 Put z=1 in \eqref{GrindEQ__2_2_} now we get
\[P_{n}^{(\alpha ,\beta )} (1,w)=\sum _{k=0}^{n} \frac{\left(1+\alpha \right)_{n} \left(1+\alpha +\beta \right)_{n+k} }{k!(n-k)!\left(1+\alpha \right)_{k} \left(1+\alpha +\beta \right)_{n} } \left(\frac{1-\sqrt{w} }{2} \right)^{k} \]

Taking differential operator $\Xi _{w} =\left(\sqrt{w} \frac{\partial }{\partial w} \right)$ then following the same process used in the above theorem leads to the desired result.
\end{proof}

\section {\bf  Recurrence relations for Bateman's polynomials}

\noindent

\noindent Now, we present complex bivariate Bateman's polynomial by replacing $x,y \in \mathbb{R}$ by $z,w \in \mathbb{C}$ such that

\begin{eqnarray} \label{GrindEQ__3_1_}
{\rm B} _{n}^{(\alpha ,\beta )} (z,w)=\left[\sum _{n=0}^{\infty }
\frac{\frac{1}{2} \left(z-\sqrt{w} \right)^{n} t^{n}
}{n!\left(1+\alpha \right)_{n} } \right]\left[\sum _{n=0}^{\infty }
\frac{\frac{1}{2} \left(z+\sqrt{w} \right)^{n} t^{n}
}{n!\left(1+\beta \right)_{n} } \right]
\end{eqnarray}
and
\begin{eqnarray}
&&\hskip5mm{\rm B} _{n}^{(\alpha ,\beta )} (z,w)=\sum _{n=0}^{\infty } \frac{P_{n}^{(\alpha ,\beta )} (z,w)t^{n} }{\left(1+\alpha \right)_{n} \left(1+\beta \right)_{n} }\\
&& \left(\Re(\alpha) >-1, \; \Re(\beta) >-1, |z|<1, |w|<1 \right) \nonumber.
\end{eqnarray}

We can also write the conjugate relationships for the purpose to use these relations in this section.
\begin{eqnarray}
&&\left(1+\alpha \right)_{n+1} =\left(1+\alpha \right)\left(1+(1+\alpha )\right)_{n}; \\
&&\left(1+\beta \right)_{n+1} =\left(1+\beta \right)\left(1+(1+\beta )\right)_{n} .
\end{eqnarray}

\begin{theorem}
Following recurrence relation for the Bateman's polynomial ${\rm B} _{n}^{(\alpha ,\beta )} (z,w)$ holds true
\\
\begin{eqnarray} \label{GrindEQ__3_2_}
&&\frac{\partial }{\partial z} \; {\rm B} _{n}^{(\alpha ,\beta )}
(z,w)+ \sqrt{w} \frac{\partial }{\partial w}\; {\rm B} _{n}^{(\alpha ,\beta )}(z,w)-\frac{\;t}{2\left(1+\alpha \right)} {\rm B} _{n}^{\left[(1+\alpha) ,\beta
\right]} (z,w)-\frac{3\;t}{2\left(1+\beta \right)} {\rm B} _{n}^{\left[\alpha ,(1+\beta
)\right]} (z,w)=0\nonumber\\
\\
&&\hskip20mm \left(\Re(\alpha) >-1, \; \Re(\beta) >-1, |z|<1, |w|<1 \right)\nonumber.
\end{eqnarray}
\end{theorem}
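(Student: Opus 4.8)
The plan is to exploit the factored form of the Bateman polynomial in \eqref{GrindEQ__3_1_}, which presents $\sum_n {\rm B}_n^{(\alpha,\beta)}(z,w)\,t^n$ as a product $S_\alpha\cdot S_\beta$, where $S_\alpha=\sum_{k\ge 0}\frac{(z-\sqrt{w})^k t^k}{2\,k!\,(1+\alpha)_k}$ depends on $z,w$ only through the combination $z-\sqrt{w}$, and $S_\beta=\sum_{k\ge 0}\frac{(z+\sqrt{w})^k t^k}{2\,k!\,(1+\beta)_k}$ depends only on $z+\sqrt{w}$. Since the operator $\Xi$ of \eqref{GrindEQ__2_1_} is a first-order linear differential operator, it obeys the Leibniz rule, so $\Xi(S_\alpha S_\beta)=(\Xi S_\alpha)S_\beta+S_\alpha(\Xi S_\beta)$; this reduces the whole computation to the action of $\Xi$ on each factor separately.

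First I would record the chain-rule identities that make the two factors behave asymmetrically. Writing $u=z-\sqrt{w}$, one has $\frac{\partial}{\partial z}S_\alpha=S_\alpha'$ and $\sqrt{w}\frac{\partial}{\partial w}S_\alpha=\sqrt{w}\cdot\bigl(-\tfrac{1}{2\sqrt{w}}\bigr)S_\alpha'=-\tfrac12 S_\alpha'$, whence $\Xi S_\alpha=\tfrac12 S_\alpha'$; this is exactly the $\tfrac12-\tfrac14$ cancellation already exhibited in the Jacobi computation of the preceding section. For $v=z+\sqrt{w}$ the sign of the $w$-derivative flips, giving $\Xi S_\beta=\tfrac32 S_\beta'$. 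These asymmetric factors $\tfrac12$ and $\tfrac32$ are precisely the origin of the coefficients appearing in \eqref{GrindEQ__3_2_}, and pinning them down is the conceptual heart of the argument.

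Next I would evaluate $S_\alpha'$ and $S_\beta'$ as parameter-shifted series. Differentiating $S_\alpha$ term by term, reindexing $k\mapsto k+1$, and applying the conjugate relation $(1+\alpha)_{k+1}=(1+\alpha)(1+(1+\alpha))_k$ collapses the result to $S_\alpha'=\frac{t}{1+\alpha}\,\widetilde{S}_\alpha$, where $\widetilde{S}_\alpha$ is the same series with $\alpha$ replaced by $1+\alpha$; the parallel computation with $(1+\beta)_{k+1}=(1+\beta)(1+(1+\beta))_k$ yields $S_\beta'=\frac{t}{1+\beta}\,\widetilde{S}_\beta$. Substituting into the Leibniz expansion gives
\[
\Xi(S_\alpha S_\beta)=\frac{t}{2(1+\alpha)}\,\widetilde{S}_\alpha S_\beta+\frac{3t}{2(1+\beta)}\,S_\alpha\widetilde{S}_\beta .
\]
Finally, reading the definition \eqref{GrindEQ__3_1_} with shifted parameters identifies $\widetilde{S}_\alpha S_\beta$ as the generating function of ${\rm B}_n^{[(1+\alpha),\beta]}(z,w)$ and $S_\alpha\widetilde{S}_\beta$ as that of ${\rm B}_n^{[\alpha,(1+\beta)]}(z,w)$; comparing this with $\Xi{\rm B}_n^{(\alpha,\beta)}(z,w)=\frac{\partial}{\partial z}{\rm B}_n^{(\alpha,\beta)}+\sqrt{w}\frac{\partial}{\partial w}{\rm B}_n^{(\alpha,\beta)}$ produces \eqref{GrindEQ__3_2_}.

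I anticipate that the only delicate point is the bookkeeping: keeping the two factors cleanly separated so that the factor $\tfrac12$ attaches to the $\alpha$-shift and $\tfrac32$ to the $\beta$-shift, and checking that the surviving power of $t$ is exactly one rather than being absorbed into a shift of the index $n$. Once the chain-rule factors of Step~2 are fixed, the remainder is the same Pochhammer reindexing used for the Jacobi recurrence, so no genuinely new obstacle arises.
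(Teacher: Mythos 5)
Your proposal is correct and takes essentially the same route as the paper's own proof: both apply $\Xi$ to the factored product form of ${\rm B}_n^{(\alpha,\beta)}(z,w)$ via the Leibniz rule, extract the asymmetric chain-rule factors $\tfrac12$ (on the $z-\sqrt{w}$ factor) and $\tfrac32$ (on the $z+\sqrt{w}$ factor), and then reindex with $(1+\alpha)_{k+1}=(1+\alpha)\left(1+(1+\alpha)\right)_k$ and $(1+\beta)_{k+1}=(1+\beta)\left(1+(1+\beta)\right)_k$ to identify the shifted-parameter products; your version merely organizes the same computation more cleanly through the variables $u=z-\sqrt{w}$, $v=z+\sqrt{w}$. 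As a minor point, you even carry the signs more consistently than the paper, whose final displayed line writes $-\frac{3t}{2(1+\beta)}{\rm B}_n^{[\alpha,(1+\beta)]}(z,w)$ in contradiction with its own preceding line and with the theorem statement, which require the $+$ sign you obtained.
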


\begin{proof} Using the differential operator $\Xi \; $ for the
Bateman's polynomial of two variables ${\rm B}
_{n}^{(\alpha ,\beta )} (z,w)$ we see that
\[\Xi \; {\rm B} _{n}^{(\alpha ,\beta )} (z,w)\; =\left(\frac{\partial }{\partial z} +\sqrt{w} \frac{\partial }{\partial w} \right)\left[\sum _{n=0}^{\infty } \frac{\frac{1}{2} \left(z-\sqrt{w} \right)^{n} t^{n} }{n!\left(1+\alpha \right)_{n} } \right]\left[\sum _{n=0}^{\infty } \frac{\frac{1}{2} \left(z+\sqrt{w} \right)^{n} t^{n} }{n!\left(1+\beta \right)_{n} } \right] \]
\\
\[=\left[\sum _{n=0}^{\infty } \frac{\frac{n}{2} \left(z-\sqrt{w} \right)^{n-1} t^{n} }{n!\left(1+\alpha \right)_{n} } \right]\left[\sum _{n=0}^{\infty } \frac{\frac{1}{2} \left(z+\sqrt{w} \right)^{n} t^{n} }{n!\left(1+\beta \right)_{n} } \right]+\left[\sum _{n=0}^{\infty } \frac{\frac{1}{2} \left(z-\sqrt{w} \right)^{n} t^{n} }{n!\left(1+\alpha \right)_{n} } \right]\left[\sum _{n=0}^{\infty } \frac{\frac{n}{2} \left(z+\sqrt{w} \right)^{n-1} t^{n} }{n!\left(1+\beta \right)_{n} } \right]\]
\\
\[- \left[\sum _{n=0}^{\infty } \frac{\frac{n}{4} \left(z-\sqrt{w} \right)^{n-1} t^{n} }{n!\left(1+\alpha \right)_{n} } \right]\left[\sum _{n=0}^{\infty } \frac{\frac{1}{2} \left(z+\sqrt{w} \right)^{n} t^{n} }{n!\left(1+\beta \right)_{n} } \right]+\left[\sum _{n=0}^{\infty } \frac{\frac{1}{2} \left(z-\sqrt{w} \right)^{n} t^{n} }{n!\left(1+\alpha \right)_{n} } \right]\left[\sum _{n=0}^{\infty } \frac{\frac{n}{4} \left(z+\sqrt{w} \right)^{n-1} t^{n} }{n!\left(1+\beta \right)_{n} } \right] \]
\\
\[{=\frac{t}{2\left(1+\alpha \right)} \left[\sum _{n=0}^{\infty } \frac{\frac{1}{2} \left(z-\sqrt{w} \right)^{n} t^{n} }{n!\left[1+\left(1+\alpha \right)\right]_{n} } \right]\left[\sum _{n=0}^{\infty } \frac{\frac{1}{2} \left(z+\sqrt{w} \right)^{n} t^{n} }{n!\left(1+\beta \right)_{n} } \right]}\]
\[{\; \; +\frac{3\; t}{2\left(1+\beta \right)} \left[\sum _{n=0}^{\infty } \frac{\frac{1}{2} \left(z-\sqrt{w} \right)^{n} t^{n} }{n!\left(1+\alpha \right)_{n} } \right]\left[\sum _{n=0}^{\infty } \frac{\frac{1}{2} \left(z+\sqrt{w} \right)^{n} t^{n} }{n!\left[1+\left(1+\beta \right)\right]_{n} } \right]}\]

\[=\frac{t}{2\left(1+\alpha \right)} {\rm B} _{n}^{\left[(1+\alpha ),\beta \right]} (z,w)-\frac{3\; t}{2\left(1+\beta \right)} {\rm B} _{n}^{\left[\alpha ,(1+\beta )\right]} (z,w).\]
Therefore, we get the desired result.
\end{proof}

\begin{corollary}
 Following recurrence relation for the Bateman's polynomial ${\rm B} _{n}^{(\alpha ,\beta )} (z,1)$ holds true
\begin{eqnarray} \label{GrindEQ__3_3_}
&&\frac{\partial }{\partial z} \; {\rm B} _{n}^{(\alpha ,\beta )} (z,1)-\frac{t}{(1+\alpha )} {\rm B} _{n}^{\left[(1+\alpha ),\beta \right]} (z,1)-\frac{t}{(1+\beta )} {\rm B} _{n}^{\left[\alpha ,(1+\beta )\right]} (z,1)=0\\
&&\hskip20mm \left(\Re(\alpha) >-1, \; \Re(\beta) >-1, |z|<1 \right)\nonumber.
\end{eqnarray}
\end{corollary}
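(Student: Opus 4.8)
The plan is to mimic the computation in the proof of the preceding theorem, but with the single operator $\Xi_z=\frac{\partial}{\partial z}$ in place of the full operator $\Xi$, exactly as was done when passing from the Jacobi theorem to its corollary. First I would set $w=1$ in the product representation of the Bateman polynomial, so that $\sqrt{w}=1$ and
\[
{\rm B}_{n}^{(\alpha,\beta)}(z,1)=\left[\sum_{n=0}^{\infty}\frac{\frac{1}{2}(z-1)^{n}t^{n}}{n!\,(1+\alpha)_{n}}\right]\left[\sum_{n=0}^{\infty}\frac{\frac{1}{2}(z+1)^{n}t^{n}}{n!\,(1+\beta)_{n}}\right].
\]
The factorisation into a $(z-1)$-series and a $(z+1)$-series is what makes the Leibniz rule convenient.

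Next I would apply $\Xi_z=\frac{\partial}{\partial z}$ and expand by the product rule, so that the derivative lands on each factor in turn. Differentiating $(z-1)^{n}$, resp. $(z+1)^{n}$, in $z$ brings down a factor $n$, producing two terms: one carrying $\frac{n}{2}(z-1)^{n-1}$ in the first factor with the second factor untouched, the other with the first factor untouched and $\frac{n}{2}(z+1)^{n-1}$ in the second. In each term I would shift the summation index $n\to n+1$, cancel the resulting $n+1$ against $(n+1)!$ to recover $n!$, and then invoke the conjugate relations $(1+\alpha)_{n+1}=(1+\alpha)\,(1+(1+\alpha))_{n}$ and $(1+\beta)_{n+1}=(1+\beta)\,(1+(1+\beta))_{n}$ recorded just before the preceding theorem to peel the leading $(1+\alpha)$, resp. $(1+\beta)$, off the denominator.

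After the shift the first term becomes $\frac{t}{1+\alpha}$ times a $(z-1)$-series with $\alpha$ replaced by $1+\alpha$, which is exactly the first factor of ${\rm B}_{n}^{[(1+\alpha),\beta]}(z,1)$; together with the untouched $(z+1)$-series this reassembles ${\rm B}_{n}^{[(1+\alpha),\beta]}(z,1)$. Symmetrically the second term yields $\frac{t}{1+\beta}\,{\rm B}_{n}^{[\alpha,(1+\beta)]}(z,1)$. Collecting the two contributions gives
\[
\frac{\partial}{\partial z}{\rm B}_{n}^{(\alpha,\beta)}(z,1)=\frac{t}{1+\alpha}{\rm B}_{n}^{[(1+\alpha),\beta]}(z,1)+\frac{t}{1+\beta}{\rm B}_{n}^{[\alpha,(1+\beta)]}(z,1),
\]
which is the asserted relation after transposition.

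There is no genuine obstacle here; the content is entirely bookkeeping in the index shift and the Pochhammer identities. The single point that deserves care is the size of the coefficients: in the preceding theorem they emerged as $\frac{t}{2(1+\alpha)}$ and $\frac{3t}{2(1+\beta)}$, whereas here they are the symmetric $\frac{t}{1+\alpha}$ and $\frac{t}{1+\beta}$. The difference is accounted for by the absence of the $\sqrt{w}\frac{\partial}{\partial w}$ term: in the full operator that term contributed $-\frac{n}{4}(z-\sqrt{w})^{n-1}$ and $+\frac{n}{4}(z+\sqrt{w})^{n-1}$, converting the $\frac{n}{2}$ produced by $\frac{\partial}{\partial z}$ into $\frac{n}{4}$ and $\frac{3n}{4}$ and so breaking the symmetry, while with $\Xi_z$ alone only the $\frac{n}{2}$ survives in each factor. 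I would verify this factor-of-two change explicitly to be sure the specialisation is consistent.
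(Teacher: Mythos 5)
Your proposal is correct and takes essentially the same approach as the paper: the paper's proof likewise substitutes $w=1$ into the product representation of ${\rm B}_{n}^{(\alpha,\beta)}(z,w)$ and applies $\Xi_{z}=\frac{\partial}{\partial z}$ ``following the same process used in the above theorem,'' which is exactly the Leibniz-rule, index-shift and Pochhammer-identity computation you write out in full. Your closing consistency check on the coefficients --- the symmetric $\frac{t}{1+\alpha}$ and $\frac{t}{1+\beta}$ here versus $\frac{t}{2(1+\alpha)}$ and $\frac{3t}{2(1+\beta)}$ in the theorem, the discrepancy coming precisely from dropping $\sqrt{w}\,\frac{\partial}{\partial w}$ --- is also accurate.
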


\begin{proof}
First, substitute w=1 in the Bateman's polynomial
\eqref{GrindEQ__3_1_}, we have
\begin{equation} \label{GrindEQ__3_3_}
{\rm B} _{n}^{(\alpha ,\beta )} (z,1)=\sum _{n=0}^{\infty }
\frac{\frac{1}{2} \left(z-1\right)^{n} t^{n} }{n!\left(1+\alpha
\right)_{n} } \sum _{n=0}^{\infty } \frac{\frac{1}{2}
\left(z+1\right)^{n} t^{n} }{n!\left(1+\beta \right)_{n} }
\end{equation}

Taking differential operator $\Xi _{z} =\left(\frac{\partial}{\partial z} \right)$ then following the same process used in the above theorem leads to the desired result.
\end{proof}

\begin{corollary}
 Following recurrence relation for the Bateman's polynomial ${\rm B} _{n}^{(\alpha ,\beta )} (1,w)$ holds true
\begin{eqnarray} \label{GrindEQ__3_4_}
&&\sqrt{w} \frac{\partial }{\partial w}  {\rm B} _{n}^{(\alpha ,\beta )} (1,w)+\frac{t}{2(1+\alpha
)} {\rm B} _{n}^{\left[(1+\alpha ),\beta \right]}
(1,w)-\frac{t}{2(1+\beta )} {\rm B} _{n}^{\left[\alpha ,(1+\beta
)\right]} (1,w)=0\nonumber\\
\\
&&\hskip20mm \left(\Re(\alpha) >-1, \; \Re(\beta) >-1, |w|<1 \right)\nonumber.
\end{eqnarray}
\end{corollary}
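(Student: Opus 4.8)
The plan is to mirror the proof of the preceding Bateman's polynomial theorem, but now specialize the operator $\Xi$ to its $w$-part only, since the $z$-derivative is switched off by the substitution. First I would set $z=1$ in the product-series definition \eqref{GrindEQ__3_1_}, so that
\[
{\rm B}_{n}^{(\alpha,\beta)}(1,w)=\left[\sum_{n=0}^{\infty}\frac{\frac{1}{2}\left(1-\sqrt{w}\right)^{n}t^{n}}{n!\left(1+\alpha\right)_{n}}\right]\left[\sum_{n=0}^{\infty}\frac{\frac{1}{2}\left(1+\sqrt{w}\right)^{n}t^{n}}{n!\left(1+\beta\right)_{n}}\right],
\]
and then apply $\Xi_{w}=\sqrt{w}\,\frac{\partial}{\partial w}$ by the product rule, so that exactly two terms arise, one from differentiating each factor.

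The key computation is the action of $\Xi_{w}$ on the building blocks. By the chain rule, $\sqrt{w}\,\frac{\partial}{\partial w}\left(1-\sqrt{w}\right)^{n}=-\frac{n}{2}\left(1-\sqrt{w}\right)^{n-1}$ and $\sqrt{w}\,\frac{\partial}{\partial w}\left(1+\sqrt{w}\right)^{n}=+\frac{n}{2}\left(1+\sqrt{w}\right)^{n-1}$; the opposite signs here are precisely the source of the two signs appearing in the statement. Incorporating the $\frac{1}{2}$ prefactors turns these into coefficients $\mp\frac{n}{4}$, which are exactly the two terms that appear (stripped of the $\frac{\partial}{\partial z}$ contributions) in the proof of the full theorem.

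Next I would shift the summation index $n\to n+1$ in each differentiated series, using $(n+1)!=(n+1)\,n!$ together with the conjugate relations $\left(1+\alpha\right)_{n+1}=\left(1+\alpha\right)\left(1+(1+\alpha)\right)_{n}$ and $\left(1+\beta\right)_{n+1}=\left(1+\beta\right)\left(1+(1+\beta)\right)_{n}$. The factor $n+1$ coming from differentiation cancels the $n+1$ from the factorial, leaving $\frac{1}{4}$, and pulling out one power of $t$ along with $\frac{1}{1+\alpha}$ (respectively $\frac{1}{1+\beta}$) reassembles a product series of the same shape as \eqref{GrindEQ__3_1_} but with $\alpha\mapsto 1+\alpha$ (respectively $\beta\mapsto 1+\beta$). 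Recognizing these as ${\rm B}_{n}^{[(1+\alpha),\beta]}(1,w)$ and ${\rm B}_{n}^{[\alpha,(1+\beta)]}(1,w)$ carrying prefactors $\frac{t}{2(1+\alpha)}$ and $\frac{t}{2(1+\beta)}$, and transposing them to one side, yields the claimed identity.

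The only delicate point I anticipate is sign bookkeeping: the differentiation of the $\left(1-\sqrt{w}\right)$-factor carries a minus sign whereas that of the $\left(1+\sqrt{w}\right)$-factor does not, so I must keep the two contributions separate throughout the index shift. A single sign slip would interchange the $+$ and $-$ preceding the two shifted Bateman's polynomials in the final relation; everything else is routine manipulation identical to the $w$-half of the preceding theorem.
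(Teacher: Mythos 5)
Your proposal is correct and takes essentially the same approach as the paper, which likewise sets $z=1$ in the product-series definition of ${\rm B}_{n}^{(\alpha,\beta)}(z,w)$ and then applies $\Xi_{w}=\sqrt{w}\,\frac{\partial}{\partial w}$, repeating the index-shift argument of the preceding theorem. Your sign bookkeeping, namely $\sqrt{w}\,\frac{\partial}{\partial w}\left(1-\sqrt{w}\right)^{n}=-\frac{n}{2}\left(1-\sqrt{w}\right)^{n-1}$ versus $\sqrt{w}\,\frac{\partial}{\partial w}\left(1+\sqrt{w}\right)^{n}=+\frac{n}{2}\left(1+\sqrt{w}\right)^{n-1}$, correctly produces the coefficients $+\frac{t}{2(1+\alpha)}$ and $-\frac{t}{2(1+\beta)}$ after transposition, in full agreement with the stated corollary.
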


\begin{proof}
 Put z=1 in the Bateman's polynomial
\eqref{GrindEQ__3_1_}, we get
\[{\rm B} _{n}^{(\alpha ,\beta )} (1,w)=\left[\sum _{n=0}^{\infty } \frac{\frac{1}{2} \left(1-\sqrt{w} \right)^{n} t^{n} }{n!\left(1+\alpha \right)_{n} } \right]\left[\sum _{n=0}^{\infty } \frac{\frac{1}{2} \left(1+\sqrt{w} \right)^{n} t^{n} }{n!\left(1+\beta \right)_{n} } \right].\]

Taking differential operator $\Xi _{w} =\left(\sqrt{w} \frac{\partial }{\partial w} \right)$ then following the same process used in the above theorem leads to the desired result.
\end{proof}

\section{Recurrence relations for Legendre polynomials}

In this sections, we will study the action of the following  differential operator
\begin{equation} \label{GrindEQ__4_1_}
\Delta =\left(\frac{1}{w} \frac{\partial }{\partial z} +\frac{1}{z}
\frac{\partial }{\partial w} \right),
\end{equation}
on complex bivariate Legendre polynomial $P_{n} (z,w)$ \eqref{GrindEQ__2_2_} to obtain the desired results.\\

Now, we present complex bivariate Legendre polynomial by replacing $x,y \in \mathbb{R}$ by $z,w \in \mathbb{C}$ such that
\begin{equation} \label{GrindEQ__4_2_}
P_{n} (z,w)=\sum _{k=0}^{[{n\mathord{\left/ {\vphantom {n 2}}
\right. \kern-\nulldelimiterspace} 2} ]} \frac{\left(-w\right)^{k}
\left(\frac{1}{2} \right)_{n-k} \left(2z\right)^{n-2k} }{k!(n-k)!}
\end{equation}

where, $\Re(\alpha) >-1, \; \Re(\beta) >-1$, $|z|<1, |w|<1.$\\

\begin{theorem}
Following recurrence relation for the Legendre polynomials $P_{n} (z,w)$ holds true
\begin{eqnarray} \label{GrindEQ__4_3_}
&&\frac{1}{w} \frac{\partial }{\partial z}P_{n} (z,w)\
+\left(\frac{1}{z} \frac{\partial }{\partial w}\right)P_{n} (z,w)\
-\left(\frac{n}{zw} \right)P_{n} (z,w)\;
+\left(\frac{1}{2z^{2} } \right)P_{n-1} (z,w)=0\nonumber\\
\\
&&\hskip20mm \left(\Re(\alpha) >-1, \; \Re(\beta) >-1, |z|<1, |w|<1 \right)\nonumber.
\end{eqnarray}
\end{theorem}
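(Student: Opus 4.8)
The plan is to follow the same termwise strategy used for the Jacobi and Bateman polynomials: substitute the explicit finite sum \eqref{GrindEQ__4_2_} into each of the four terms of \eqref{GrindEQ__4_3_}, collect everything over a common family of monomials in $z$ and $w$, and verify that the coefficients cancel. It is convenient to write the generic summand of $P_{n}(z,w)$ as $a_{k}\,w^{k}(2z)^{n-2k}$ with $a_{k}=\dfrac{(-1)^{k}\left(\frac12\right)_{n-k}}{k!\,(n-k)!}$, so that $\partial_{z}$ contributes the factor $2(n-2k)(2z)^{n-2k-1}$ and $\partial_{w}$ contributes the factor $k\,w^{k-1}$.

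First I would compute the three differential terms. Dividing the $z$-derivative by $w$, dividing the $w$-derivative by $z$, and using the elementary reduction $\frac{1}{z}(2z)^{n-2k}=2\,(2z)^{n-2k-1}$, each of the terms $\frac1w\partial_{z}P_{n}$, $\frac1z\partial_{w}P_{n}$ and $-\frac{n}{zw}P_{n}$ becomes a series in the common monomial $w^{k-1}(2z)^{n-2k-1}$. The decisive point is that, relative to this monomial, their coefficients are $(n-2k)$, $k$ and $-n$, whose sum is $(n-2k)+k-n=-k$. Consequently the $k=0$ term drops out, and the combination of the first three terms is a single series beginning at $k=1$.

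The last step is the shift $k\mapsto k+1$. Since $\frac{k}{k!}=\frac{1}{(k-1)!}$ and $\left(\frac12\right)_{n-k}=\left(\frac12\right)_{(n-1)-(k-1)}$, after relabelling the surviving sum has exactly the shape of the defining series \eqref{GrindEQ__4_2_} for $P_{n-1}(z,w)$, with the powers of $z$ lowered by two; collecting the leftover factors $2^{-1}z^{-2}$ reproduces the constant $\frac{1}{2z^{2}}$ multiplying $P_{n-1}(z,w)$. This identifies the combined first three terms with the $P_{n-1}$-term and establishes \eqref{GrindEQ__4_3_}.

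I expect the main obstacle to be the bookkeeping needed to fix the exact constant and sign in front of $P_{n-1}(z,w)$: one must track the powers of $2$ and of $z$ carefully through the shift, and, more subtly, check that the upper summation limit $[n/2]$ passes to $[(n-1)/2]$ without losing a boundary term, since the top index behaves differently according to the parity of $n$. As an independent consistency check I would verify the relation at the level of generating functions: multiplying \eqref{GrindEQ__4_3_} by $t^{n}$, summing over $n$, and using $\sum_{n}n\,P_{n}(z,w)\,t^{n}=t\,\partial_{t}G$ together with $\sum_{n}P_{n-1}(z,w)\,t^{n}=t\,G$ for $G=\left(1-2zt+wt^{2}\right)^{-1/2}$ reduces the claim to a single first-order partial differential equation for $G$, which can be confirmed by direct differentiation.
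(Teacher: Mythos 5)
Your reduction of the first three terms to a single series with combined coefficient $-k$ per monomial $w^{k-1}(2z)^{n-2k-1}$ is correct, and is in fact the same route as the paper (the paper first merges the two derivatives to get the factor $n-k$ and then splits $-2(n-k)$ as $-2n(\cdot)-2k(\cdot)$, which is already a sign error). But the ``bookkeeping'' you defer is exactly where the claim collapses, so this is a genuine gap rather than a cosmetic one. Carry out your own shift $k\mapsto k+1$ honestly: the summand carries $(-1)^k$, so $-k\cdot\frac{(-1)^k}{k!}$ becomes $+\frac{(-1)^{j}}{j!}$ at $k=j+1$, and the surviving series equals $+\frac{1}{2z^2}\,P_{n-1}(z,w)$ (up to the boundary issue below), not $-\frac{1}{2z^2}\,P_{n-1}(z,w)$. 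Hence the four-term left-hand side of \eqref{GrindEQ__4_3_} sums to $\frac{1}{z^2}P_{n-1}(z,w)$, and your plan, executed faithfully, refutes the stated relation instead of establishing it: what it proves (for even $n$) is $\frac1w\partial_z P_n+\frac1z\partial_w P_n-\frac{n}{zw}P_n-\frac{1}{2z^2}P_{n-1}=0$, with a minus sign on the last term. A two-line check confirms this: from \eqref{GrindEQ__4_2_}, $P_2=\frac{3z^2-w}{2}$ and $P_1=z$, and the left-hand side of \eqref{GrindEQ__4_3_} evaluates to $\frac{3z}{w}-\frac{1}{2z}-\left(\frac{3z}{w}-\frac{1}{z}\right)+\frac{1}{2z}=\frac{1}{z}\neq0$, while the sign-flipped version gives $0$.

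The parity problem you flag is likewise a genuine failure and cannot be repaired by care alone: for odd $n$ the shifted index runs only up to $[n/2]-1=\frac{n-1}{2}-1$, one short of $\left[\frac{n-1}{2}\right]$, so the $z$-free top term of $P_{n-1}$ is never produced; already at $n=1$ the first three terms cancel identically while $P_0=1$, so the relation fails for odd $n$ with either sign (the correct odd-$n$ identity acquires the extra term $\frac{1}{2z^2}\cdot\frac{(-w)^{(n-1)/2}\left(\frac12\right)_{(n-1)/2}}{\left(\left(\frac{n-1}{2}\right)!\right)^{2}}$). Your proposed generating-function cross-check would have exposed all of this, but note two caveats: $\left(1-2zt+wt^2\right)^{-1/2}$ is not the generating function of the family \eqref{GrindEQ__4_2_} used here (that kernel generates coefficients with $k!(n-2k)!$ in place of $k!(n-k)!$), and in any case the first-order PDE you describe reduces to $\frac{t^{2}}{2z}R^{-3/2}+\frac{t}{2z^{2}}R^{-1/2}$ with $R=1-2zt+wt^{2}$, which is not identically zero. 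For comparison, the paper's own proof proceeds exactly as you do and breaks at precisely the two points you left open: it mis-splits the factor $n-k$ and silently keeps the upper summation limit $[n/2]$ unchanged through the shift.
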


\begin{proof} For Legendre polynomials \eqref{GrindEQ__4_2_} of two
variables $P_{n} (z,w)$ we see that

\[{\Delta \; P_{n} (z,w)=\left(\frac{1}{w} \frac{\partial }{\partial z} +\frac{1}{z} \frac{\partial }{\partial w} \right)\sum _{k=0}^{[{n\mathord{\left/ {\vphantom {n 2}} \right. \kern-\nulldelimiterspace} 2} ]} \frac{\left(-w\right)^{k} \left(\frac{1}{2} \right)_{n-k} \left(2z\right)^{n-2k} }{k!(n-k)!} }\]

\[{=-2\sum _{k=0}^{[{n\mathord{\left/ {\vphantom {n 2}} \right. \kern-\nulldelimiterspace} 2} ]} \frac{\left(-w\right)^{k-1} \left(\frac{1}{2} \right)_{n-k} \left(n-2k\right)\left(2z\right)^{n-2k-1} }{k!(n-k)!} -2\sum _{k=0}^{[{n\mathord{\left/ {\vphantom {n 2}} \right. \kern-\nulldelimiterspace} 2} ]} \frac{k\left(-w\right)^{k-1} \left(\frac{1}{2} \right)_{n-k} \left(2z\right)^{n-2k-1} }{k!(n-k)!} } \]

 \[\; \; \; \; \; =-2\sum _{k=0}^{[{n\mathord{\left/ {\vphantom {n 2}} \right. \kern-\nulldelimiterspace} 2} ]} \frac{\left(n-k\right)\left(-w\right)^{k-1} \left(\frac{1}{2} \right)_{n-k} \left(2z\right)^{n-2k-1} }{k!(n-k)!} \]
\[\; \; \; \; \; =-2n\sum _{k=0}^{[{n\mathord{\left/ {\vphantom {n 2}} \right. \kern-\nulldelimiterspace} 2} ]} \frac{\left(-w\right)^{k-1} \left(\frac{1}{2} \right)_{n-k} \left(2z\right)^{n-2k-1} }{k!(n-k)!} \; -2\sum _{k=0}^{[{n\mathord{\left/ {\vphantom {n 2}} \right. \kern-\nulldelimiterspace} 2} ]} \frac{k\left(-w\right)^{k-1} \left(\frac{1}{2} \right)_{n-k} \left(2z\right)^{n-2k-1} }{k!(n-k)!} \]

\[=\left(\frac{n}{zw} \right)\sum _{k=0}^{[{n\mathord{\left/ {\vphantom {n 2}} \right. \kern-\nulldelimiterspace} 2} ]} \frac{\left(-w\right)^{k} \left(\frac{1}{2} \right)_{n-k} \left(2z\right)^{n-2k} }{k!(n-k)!} \; -2\left(\frac{1}{2z} \right)^{2} \sum _{k=0}^{[{n\mathord{\left/ {\vphantom {n 2}} \right. \kern-\nulldelimiterspace} 2} ]} \frac{\left(-w\right)^{k} \left(\frac{1}{2} \right)_{(n-1)-k} \left(2z\right)^{\left[(n-1)-2k\right]} }{k!\left[(n-1)-k)\right]!}\]

\[\; \; \; \; \; =\left(\frac{n}{zw} \right)P_{n} (z,w)\; -\left(\frac{1}{2z^{2} } \right)P_{n-1} (z,w)\]
\\
Now, on some simplification, we get our desired result.
\end{proof}

\begin{corollary} Following recurrence relation for the Legendre polynomials $P_{n} (z,1)$ holds true
\begin{eqnarray} \label{GrindEQ__3_3_}
&&\frac{1}{w} \frac{\partial }{\partial z} P_{n} (z,1)\; -\left(\frac{n}{z} \right)P_{n} (z,1)\; +\left(\frac{1}{2z^{2} } \right)P_{n-1} (z,1)=0\\
&&\hskip20mm \left(\Re(\alpha) >-1, \; \Re(\beta) >-1, |z|<1 \right)\nonumber.
\end{eqnarray}
\end{corollary}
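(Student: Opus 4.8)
The plan is to obtain this relation as the single-variable reduction of Theorem~3, in exactly the manner by which the Jacobi and Bateman corollaries were read off from their parent theorems. First I would set $w=1$ in the complex bivariate Legendre polynomial \eqref{GrindEQ__4_2_}; the factor $(-w)^{k}$ collapses to $(-1)^{k}$ and one is left with the one-variable expansion $P_{n}(z,1)=\sum_{k=0}^{[n/2]}\frac{(-1)^{k}\left(\frac12\right)_{n-k}(2z)^{n-2k}}{k!(n-k)!}$. Since this is now free of $w$, the term $\frac1z\frac{\partial}{\partial w}$ of the operator $\Delta$ in \eqref{GrindEQ__4_1_} annihilates it, so applying $\Delta$ reduces to applying its $z$-component $\Delta_{z}=\frac1w\frac{\partial}{\partial z}$, which is the only derivative appearing in the claimed identity.

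Next I would differentiate term by term. Differentiating $(2z)^{n-2k}$ brings down a factor $2(n-2k)$ and lowers the exponent by one. The algebraic heart of the argument, mirroring the bookkeeping in the proof of Theorem~3, is to rewrite the coefficient $n-2k$ so that the single differentiated sum separates into two recognizable pieces: one piece carrying the factor $n$, which recombines with $\frac{(2z)^{n-2k}}{z}$ to give $\frac{n}{z}P_{n}(z,1)$, and a second piece which, after the shift $k\mapsto k+1$ and the reductions $(n-k)!=(n-k)(n-k-1)!$ together with the corresponding lowering of $\left(\frac12\right)_{n-k}$, collects into $\frac{(2z)^{(n-1)-2k}}{z^{2}}$ and is identified with $\frac{1}{2z^{2}}P_{n-1}(z,1)$. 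Transposing both pieces then gives the stated relation.

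The main obstacle is precisely this last identification. The index bookkeeping through the shift $k\mapsto k+1$ must respect the upper limit $[n/2]$, and the constant emerging from the differentiated monomial has to be matched exactly to the $\frac12$ multiplying $P_{n-1}(z,1)$ and to the $\frac{n}{z}$ multiplying $P_{n}(z,1)$. I expect the delicate point to be reconciling the coefficient produced by a single $z$-derivative, which naturally carries $n-2k$, with the $n-k$ combination that arises in the two-variable computation of Theorem~3 (where the $w$-derivative supplies the extra $k$); keeping careful track of the Pochhammer reductions and of the boundary terms of the sum is where the argument must be executed with care. As a cross-check I would test the relation for small $n$ (say $n=1,2$) to pin down the constants before committing to the general reindexing.
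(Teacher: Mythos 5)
Your plan is, in outline, exactly the paper's own proof: the paper likewise sets $w=1$ in \eqref{GrindEQ__4_2_}, replaces $\Delta$ by its $z$-component $\Delta_z=\frac{1}{w}\frac{\partial}{\partial z}$, and asserts that ``the same process used in the above theorem'' gives the result. But the step you yourself singled out as the delicate one --- reconciling the factor $n-2k$ produced by a lone $z$-derivative with the factor $n-k$ that drives the two-variable computation --- is precisely where the argument fails, and it fails irreparably because the stated identity is false. Carrying out your own plan: differentiating term by term gives $\partial_z P_n(z,1)=\frac{1}{z}\sum_{k=0}^{[n/2]}(n-2k)\frac{(-1)^k\left(\frac12\right)_{n-k}(2z)^{n-2k}}{k!\,(n-k)!}$; the $n$-part recombines to $\frac{n}{z}P_n(z,1)$ as you predicted, but the $-2k$ part, after the shift $k\mapsto k+1$, picks up a sign from $(-1)^{k+1}$ and yields $+\frac{1}{z^2}P_{n-1}(z,1)$ (for even $n$), so the true relation is
\begin{equation*}
\frac{\partial}{\partial z}P_n(z,1)-\frac{n}{z}P_n(z,1)-\frac{1}{z^2}P_{n-1}(z,1)=0 ,
\end{equation*}
with the \emph{opposite} sign and \emph{twice} the magnitude of the stated $+\frac{1}{2z^2}$ term. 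In Theorem~3 the $w$-derivative contributes the extra $k$ that turns $n-2k$ into $n-k$; once that term is discarded (correctly, since $P_n(z,1)$ is $w$-free) the coefficient of $P_{n-1}$ necessarily changes, so ``the same process'' cannot reproduce the theorem's constant $\frac{1}{2z^2}$ --- the corollary is not the $w=1$ specialization of the theorem either, because $\left[\partial_w P_n\right](z,1)\neq 0$ even though $\partial_z$-only differentiation of $P_n(z,1)$ sees no $w$. The small-$n$ test you proposed exposes this immediately: for $n=2$, $P_2(z,1)=\frac{3z^2-1}{2}$, $P_1(z,1)=z$, and the stated left-hand side equals $3z-\frac{2}{z}\cdot\frac{3z^2-1}{2}+\frac{1}{2z}=\frac{3}{2z}\neq 0$.

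Two further defects you should be aware of. First, the reindexing has a boundary problem for odd $n$: the $k$-weighted sum, after $k\mapsto k+1$, runs only to $[n/2]-1$, which for odd $n$ is strictly less than $[(n-1)/2]$, so the constant term of $P_{n-1}(z,1)$ is missing (at $n=1$ the shifted sum is empty while $P_0=1$); the clean identity above therefore holds only for even $n$. Second, these are not your errors alone: the paper's proof of this corollary is the same unexecuted one-line sketch, and the parent Theorem~3 already contains the seed of the sign mistake, splitting $-2(n-k)$ as $-2n\sum(\cdot)-2\sum k(\cdot)$ instead of $-2n\sum(\cdot)+2\sum k(\cdot)$ (the corrected two-variable relation is $\Delta P_n-\frac{n}{zw}P_n-\frac{1}{2z^2}P_{n-1}=0$ for even $n$, as a check at $n=2$ confirms). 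So your structural instincts --- dropping the $\partial_w$ term, splitting $n-2k$, shifting the index, and distrusting the constants until verified numerically --- were all sound; a faithful execution of your plan proves a corrected identity and disproves the statement as printed, rather than proving it.
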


\begin{proof}
First, substitute w=1 in equation \eqref{GrindEQ__4_2_} we get:
\begin{equation} \label{GrindEQ__4_4_}
P_{n} (z,1)=\sum _{k=0}^{[{n\mathord{\left/ {\vphantom {n 2}}
\right. \kern-\nulldelimiterspace} 2} ]} \frac{\left(-1\right)^{k}
\left(\frac{1}{2} \right)_{n-k} \left(2z\right)^{n-2k} }{k!(n-k)!}.
\end{equation}

Taking differential operator $\Delta _{z} =\left(\frac{1}{w} \frac{\partial }{\partial z} \right)$ then following the same process used in the above theorem leads to the desired result.
\end{proof}

\begin{corollary} Following recurrence relation for the Legendre polynomials $P_{n} (1,w)$ holds true
\begin{eqnarray} \label{GrindEQ__3_4_}
&&\frac{1}{z} \frac{\partial }{\partial w} P_{n} (1,w)\; -\left(\frac{n}{w} \right)P_{n} (1,w)\; +\left(\frac{1}{2} \right)P_{n-1} (1,w)=0\\
&&\hskip20mm \left(\Re(\alpha) >-1, \; \Re(\beta) >-1, |w|<1 \right)\nonumber.
\end{eqnarray}
\end{corollary}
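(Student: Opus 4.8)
The plan is to follow the template of the preceding theorem and of its two corollaries: first specialize the bivariate Legendre polynomial to $z=1$, then let the operator $\Delta_w=\frac{1}{z}\frac{\partial}{\partial w}$ act on it and rearrange the resulting series by the same index shift used in the theorem. Setting $z=1$ in \eqref{GrindEQ__4_2_} gives
\[
P_n(1,w)=\sum_{k=0}^{[n/2]}\frac{(-w)^k\left(\tfrac12\right)_{n-k}\,2^{\,n-2k}}{k!\,(n-k)!},
\]
a polynomial in $w$ of exactly the same shape as the general series but with the $z$–slot frozen, so that the computation reduces to applying $\Delta_w$ and identifying the output as a combination of $P_n(1,w)$ and $P_{n-1}(1,w)$.

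I would next differentiate term by term. The identity $\frac{\partial}{\partial w}(-w)^k=-k(-w)^{k-1}$ brings down a factor $k$; absorbing it through $\frac{k}{k!}=\frac{1}{(k-1)!}$ and shifting $k\mapsto k+1$ turns $\left(\tfrac12\right)_{n-k}$ into $\left(\tfrac12\right)_{(n-1)-k}$ and $2^{\,n-2k}$ into $2^{\,(n-1)-2k}$. After restoring the denominator $(n-1-k)!$, the surviving sum is a numerical multiple of the series for $P_{n-1}(1,w)$, and tracking the powers of $2$ together with the sign pins this multiple down to the coefficient $\tfrac12$ appearing in the claim. This is precisely the mechanism by which the index shift regenerates $P_{n-1}$ in the theorem.

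The main obstacle is recovering the middle term $\frac{n}{w}P_n(1,w)$. A single application of $\Delta_w$ yields only the $k$–weighted sum above, which by itself reproduces $P_{n-1}$ but never $P_n$; in the theorem the coefficient $n$ emerged only after the two contributions from $\frac{1}{w}\frac{\partial}{\partial z}$ (carrying the factor $n-2k$) and from $\frac{1}{z}\frac{\partial}{\partial w}$ (carrying $k$) were added, the combined weight being $(n-2k)+k=n-k$. To produce $\frac{n}{w}P_n(1,w)$ here I would retain that same combination, split the weight $n-k$ into its two parts $n$ and $-k$, and use $(-w)^{k-1}=-(-w)^k/w$ together with $2^{\,n-2k-1}=2^{\,n-2k}/2$ to fold the $n$–part back into $P_n(1,w)$ with the prefactor $\frac{n}{w}$, while the $-k$–part merges with the derivative term to deliver $\tfrac12\,P_{n-1}(1,w)$. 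The delicate step — and the place where the signs and the numerical coefficients $\frac{n}{w}$ and $\tfrac12$ must be checked carefully against the statement — is reconciling this combined $n-k$ bookkeeping with the fact that, after the substitution $z=1$, only $\Delta_w$ is nominally being applied; verifying that the $z=1$ reduction of the splitting delivers exactly the two advertised terms with the stated signs is the crux of the argument.
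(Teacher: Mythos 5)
Your reconstruction of the method --- set $z=1$, apply $\Delta_w=\frac{1}{z}\frac{\partial}{\partial w}$, shift $k\mapsto k+1$ --- is exactly the paper's recipe, but the ``delicate step'' you flag at the end is not a step requiring careful checking: it is an outright obstruction, and the argument cannot be completed. Once $z$ is frozen at $1$, the weight $n-2k$ (which in the theorem is produced by $\frac{1}{w}\frac{\partial}{\partial z}$ acting on $(2z)^{n-2k}$) never appears; the only available weight is $k$, and the honest computation terminates at
\begin{equation*}
\frac{\partial}{\partial w}P_n(1,w)=-\sum_{k\ge 1}\frac{(-w)^{k-1}\left(\tfrac12\right)_{n-k}2^{\,n-2k}}{(k-1)!\,(n-k)!}=-\tfrac12\,P_{n-1}(1,w),
\end{equation*}
a two-term relation with no $\frac{n}{w}P_n(1,w)$ anywhere (and even this holds only for even $n$: after the shift the sum runs to $[n/2]-1$, which for odd $n$ falls short of $[(n-1)/2]$ and loses the top term of $P_{n-1}$ --- try $n=1$, where the left side is $0$ but $P_0=1$). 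Your proposed repair, ``retain the combination $(n-2k)+k=n-k$,'' is circular: no $\partial_z$ acts after the specialization, so there is nothing to combine with. If instead you split the available weight $k$ as $n-(n-k)$, the $n$-part does fold into $\frac{n}{w}P_n(1,w)$, but the companion $(n-k)$-weighted sum is then not a multiple of $P_{n-1}(1,w)$ (for $n=2$ it equals $\frac12-\frac{3}{w}$), so no bookkeeping delivers the stated three-term relation.

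In fact the corollary as printed is false: with $P_2(1,w)=\frac32-\frac{w}{2}$ and $P_1(1,w)=1$, its left side is $-\frac12-\frac{2}{w}\left(\frac32-\frac{w}{2}\right)+\frac12=1-\frac{3}{w}\neq 0$, whereas the two-term identity above, $\frac{\partial}{\partial w}P_2(1,w)+\frac12 P_1(1,w)=0$, checks out. You should not fault yourself for failing to close the gap: the paper's own proof is precisely the recipe you began with (``put $z=1$, take $\Delta_w$, follow the same process as the theorem'') and has the identical defect --- the middle term $\frac{n}{w}P_n(1,w)$ is simply imported from the theorem, where it arises from the $\partial_z$ half of the operator, and cannot be regenerated once $z=1$. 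The correct conclusion of your computation is $\frac{\partial}{\partial w}P_n(1,w)+\frac12 P_{n-1}(1,w)=0$ for even $n$, not the stated corollary; your instinct that the $z=1$ reduction of the splitting is ``the crux'' was right, and the crux is fatal.
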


\begin{proof}
 Put z=1 in equation \eqref{GrindEQ__4_2_} we get:
\begin{equation} \label{GrindEQ__4_5_}
P_{n} (1,w)=\sum _{k=0}^{[{n\mathord{\left/ {\vphantom {n 2}}
\right. \kern-\nulldelimiterspace} 2} ]} \frac{\left(-w\right)^{k}
\left(\frac{1}{2} \right)_{n-k} \left(2\right)^{n-2k} }{k!(n-k)!} .
\end{equation}

Taking differential operator $\Delta _{w} =\left(\frac{1}{z} \frac{\partial }{\partial w} \right)$ then following the same process used in the above theorem leads to the desired result.
\end{proof}

\end{document}